\newtheorem{thm}{Theorem}
\newtheorem{conj}{Conjecture}[thm]
\newtheorem{prop}{Proposition}[thm]
\newtheorem{lemma}{Lemma}[thm]
\begin{document}

\title{Large $2$-coloured matchings in $3$-coloured complete hypergraphs}
\author[T. Terpai]{Tam\'as Terpai}
\address{Universit\'e de Gen\`eve\\
rue du Li\`evre 2-4, Case postale 64\\
1211 Gen\`eve 4, Switzerland}
\email{terpai@math.elte.hu}
\thanks{Supported by OTKA grant NK 81203}

\subjclass[2000]{05C55}
\keywords{generalized Ramsey theory on hypergraphs}

\begin{abstract}
We prove the generalized Ramsey-type result on large $2$-coloured matchings
in a $3$-coloured complete $3$-uniform hypergraph, supporting a conjecture
by A. Gy\'arf\'as.
\end{abstract}

\maketitle

\section{Introduction and statement of result}
In \cite{gyarfas}, the authors consider generalisations of Ramsey-type
problems where the goal is not to find a monochromatic subgraph, but a
subgraph that uses ``few'' colours. In particular, the following theorem is
proven:

\begin{thm}[{\cite[Theorem 13]{gyarfas}}] For $k \geq 1$, in every
$3$-colouring of a complete graph with $f(k)=\left \lfloor \frac{7k-1}{3}
\right \rfloor$
vertices there is a $2$-coloured matching of size $k$. This is sharp for
every $k \geq 2$, i.e. there is a $3$-colouring of $K_{f(k)-1}$ that does
not contain a $2$-coloured matching of size $k$.
\end{thm}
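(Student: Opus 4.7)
I would prove the two halves separately.

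For \emph{achievability}, fix a $3$-colouring $\chi$ of $K_n$ with $n=f(k)$, and let $M$ be a $2$-coloured matching of maximum size $m$, chosen---crucially---among all such matchings so that $\min(|M_1|,|M_2|)$ is as small as possible, where $M=M_1\sqcup M_2$ is the split by colour (after relabelling we assume the colours of $M$ lie in $\{1,2\}$). Suppose for contradiction $m\le k-1$; set $U=V\setminus V(M)$, so $|U|=n-2m\ge 2+\lfloor(k-1)/3\rfloor$.

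Maximality of $M$ yields two structural observations. First, every edge inside $U$ must be of colour~$3$, else it extends $M$. Second, for any $e=\{x,y\}\in M$ and distinct $u,v\in U$, one cannot have $\chi(ux),\chi(vy)\in\{1,2\}$, for then $(M\setminus\{e\})\cup\{ux,vy\}$ would be a larger $2$-coloured matching. In particular, for each $M$-edge $e$, at most two bipartite edges from $U$ to $\{x,y\}$ can avoid colour $3$. I would then construct a matching in colours $\{1,3\}$ or $\{2,3\}$ of size $\ge k$ by keeping one of $M_1,M_2$ intact and replacing the other by a colour-$3$ matching on $U$ together with the ``freed'' vertices. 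The colour-$3$ clique inside $U$ and the abundance of colour-$3$ bipartite edges to $V(M)$ both contribute; a Hall-type count, using the minimality of $\min(|M_1|,|M_2|)$ to bound the loss from the exceptional bipartite edges, shows the resulting matching has size at least $k$ when $n=f(k)$.

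For \emph{sharpness}, I would exhibit an explicit $3$-colouring of $K_{f(k)-1}$ with no $2$-coloured $k$-matching. A natural candidate takes three distinguished vertices $c_1,c_2,c_3$ with the star at $c_i$ painted colour $i$, and a background clique on the remaining $f(k)-4$ vertices coloured so that every two-colour subgraph has Tutte--Berge deficiency $\ge n-2(k-1)$; the required verification is a small case analysis on $k\bmod 3$.

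\emph{Main obstacle.} The pivotal and most delicate step is the Hall-type count in the achievability argument. The ``at most two exceptions per $M$-edge'' constraint is loose enough to allow adversarial configurations in which the exceptional edges block the colour-$3$ matching; the purpose of the minimality choice of $M$ is to rule these out, but making this rigorous---and sharp enough to produce the exact constant $7/3$ in $f(k)=\lfloor(7k-1)/3\rfloor$, rather than the weaker constant $5/2$ given by a naive count---is the heart of the proof.
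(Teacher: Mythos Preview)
First, a contextual note: this theorem is quoted in the paper from \cite{gyarfas} and is not proved here, so there is no ``paper's own proof'' to compare against. The paper does, however, describe the extremal construction for sharpness: partition the vertex set in proportions close to $1:2:4$ and colour each edge by the lowest-indexed class it meets. Your proposed sharpness example (three star-centres plus a background clique with prescribed Tutte--Berge deficiency) is not this construction, and as stated it is too vague to verify; in particular you have not said how the background clique is coloured, and the star edges at $c_1,c_2,c_3$ already conflict on the edges among $\{c_1,c_2,c_3\}$.

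More seriously, your achievability sketch contains a false deduction. From the swap observation ``for $e=\{x,y\}\in M$ and distinct $u,v\in U$ one cannot have $\chi(ux),\chi(vy)\in\{1,2\}$'' you conclude ``at most two bipartite edges from $U$ to $\{x,y\}$ can avoid colour~$3$''. That does not follow. The correct consequence is only that the sets $A_x=\{u\in U:\chi(ux)\neq 3\}$ and $A_y=\{u\in U:\chi(uy)\neq 3\}$ cannot both contain two distinct vertices; equivalently, at least one of $x,y$ sends colour~$3$ to all of $U$ with at most one exception, while the \emph{other} endpoint may send arbitrarily many non-$3$ edges into $U$. So the input to your ``Hall-type count'' is wrong, and since you already flag that count as the heart of the proof without carrying it out, the argument as written has a genuine gap at exactly the point where the constant $7/3$ must emerge. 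A workable route does exist along these lines---one exploits that for each $M$-edge one endpoint is almost entirely colour-$3$ towards $U$, and then augments through $U$---but the bookkeeping is different from what you describe and needs to be done explicitly.
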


The example that shows the sharpness of the estimate is close to the
colouring obtained by first colouring the vertices with the available
colours in proportion close to $1:2:4$ and then colouring the edges by the
lowest index colour among its endpoints. The analogous question and
construction make sense in the case of complete hypergraphs instead of
$K_n$. At the $1.$ Eml\'ekt\'abla workshop held at Gy\"ongy\"ostarj\'an in
July $2010$, the first nontrivial case of this question (with $3$-uniform
hypergraphs and $3$ colours) was considered. The best known construction
in this case is based on the proportion $1:3:9$, and leads to the following
conjecture by A. Gy\'arf\'as:

\begin{conj}
For any $t$-colouring of the complete $r$-uniform hypergraph on
$$
n \geq kr + \left\lfloor \frac{(k-1)(t-s)}{1+r+\dots+r^{s-1}} \right\rfloor
$$
vertices, there exists a $s$-coloured matching of size $k$.
\end{conj}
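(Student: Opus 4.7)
The plan is to argue by contradiction. Assume a $3$-colouring of the complete $3$-uniform hypergraph on $n$ vertices in which every $2$-coloured matching has size at most $k-1$; we want to show $n\le 3k+\lfloor(k-1)/4\rfloor-1$. Pick a maximum $2$-coloured matching $M$ overall (over all three colour pairs); permuting colours, we may assume $M$ uses only colours $1$ and $2$, and write $M=M_1\cup M_2$ according to colour, with $|M_i|=m_i$ and $m_1+m_2=m\le k-1$. Let $U:=V\setminus V(M)$, so $|U|=n-3m$. The maximality of $M$ implies that every triple contained in $U$ must be coloured $3$, since otherwise a colour-$1$ or colour-$2$ edge disjoint from $M$ could extend $M$.

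\textbf{Elementary exchange inequalities.} Combining $M_i$ with a maximum colour-$3$ matching inside $U$ yields an $\{i,3\}$-coloured matching of size $m_i+\lfloor|U|/3\rfloor$, which must be $\le k-1$. Hence $\lfloor|U|/3\rfloor\le m_j$ for $\{i,j\}=\{1,2\}$. These bounds alone give only $n\le 9(k-1)/2+O(1)$, which is much weaker than what is claimed; in particular, in the $1{:}3{:}9$ extremal construction the set $U$ can be very small, so the binding constraints do not come from $|U|$ alone.

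\textbf{Refinement mirroring the $1{:}3{:}9$ construction.} The sharp bound corresponds to the three integer constraints satisfied by the extremal vertex partition $V=A\sqcup B\sqcup C$:
\begin{align*}
|A|+|B|&\le k-1,\\
|A|+\lfloor|C|/3\rfloor&\le k-1,\\
\lfloor(|B|+|C|)/3\rfloor&\le k-1.
\end{align*}
To recover analogues in the abstract setting, I would (i) optimise the choice of $M$ among all maximum $2$-coloured matchings, preferring one whose colour-$1$ edges avoid as many vertices "visible" to colour-$3$ edges as possible, and (ii) systematically use colour-$3$ edges that cross $V(M)$ and $U$. A colour-$3$ edge with two vertices in $U$ and one in an edge $e_i\in M_i$ can be swapped with $e_i$ to produce a new $\{j,3\}$-matching, possibly longer than the one supplied by the elementary exchange above. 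Iterating these swaps and taking a maximal system of pairwise disjoint crossing colour-$3$ edges, one obtains (after careful bookkeeping of the residues $|U|\bmod 3$) the three desired inequalities relating $m_1$, $m_2$ and the appropriate maximum colour-$3$ matchings disjoint from $V(M_1)$ or $V(M_2)$. Maximising $n=3m_1+3m_2+|U|+\text{crossings}$ subject to this integer system and reading off the extremum yields $n\le 3k-1+\lfloor(k-1)/4\rfloor$.

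\textbf{Main obstacle.} The heart of the argument is the refinement step: turning the weak elementary inequalities into three tight integer constraints isomorphic to the extremal integer programme. The main difficulties are (a) choosing $M$ so that it actually "respects" the hidden $A{:}B{:}C$-partition rather than cutting across it, (b) handling the asymmetry $m_1\neq m_2$, and (c) controlling the floors in $\lfloor|U|/3\rfloor$ and in the crossing counts, since it is precisely these residues that produce the factor $1/4$ (rather than a larger constant) in $\lfloor(k-1)/4\rfloor$. Once these technical points are in place, the conclusion reduces to a straightforward integer optimisation.
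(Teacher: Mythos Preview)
The statement you were given is the \emph{general} Conjecture for arbitrary $t$, $r$, $s$; the paper does not prove it and in fact remarks that it is false for $t=6$, $s=2$. Your proposal silently specialises to $t=r=3$, $s=2$, which is Theorem~\ref{thm:preMain}, not the Conjecture. So as a proof of the stated target it is vacuous: the general statement cannot be proved.

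Restricting to the case you actually treat, your route is genuinely different from the paper's. The paper is structural: it defines \emph{universal} sextuples and $13$-sets (sets admitting a near-perfect matching in every colour pair), strips a maximal disjoint family of them, and then analyses what remains via \emph{spreads} (dominated sextuples witnessing two colour pairs). A case analysis on spreads shows the residual colouring either uses only two colours or splits into two monochromatic cliques plus a few stray edges, giving a near-perfect $2$-coloured matching. The term $\lfloor (k-1)/4\rfloor$ appears only through the arithmetic of removing $6$- and $13$-vertex universal pieces, not through any integer programme mirroring the $1{:}3{:}9$ construction.

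Your approach is extremal: fix a maximum $2$-coloured matching $M$ and bound the leftover. The elementary exchange step is correct but, as you yourself note, gives only $n\le 9(k-1)/2+O(1)$. The whole argument therefore rests on the ``refinement step,'' which you describe only programmatically (``after careful bookkeeping \ldots\ one obtains the three desired inequalities''). Nothing in the proposal shows how to do this: there is no mechanism forcing an $A\sqcup B\sqcup C$ partition to exist in an arbitrary colouring, no argument that some maximum $M$ respects it, and no control over the colour-$3$ crossing edges beyond analogy with the extremal example. You have correctly identified the obstacle but not overcome it; what is written is a plan, not a proof, and the missing piece is exactly the hard part.
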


While it is known that the conjecture fails for e.g. $t=6$ and $s=2$, several
particular cases are open. We consider here only $t=3$, $r=3$ and $s=2$, in
which case the conjecture has the form

\begin{thm}\label{thm:preMain}
For any $3$-colouring of the complete $3$-uniform hypergraph on
$$
n \geq 3k+\left\lfloor  \frac{k-1}{4} \right\rfloor
$$
vertices, there exists a $2$-coloured matching of size $k$.
\end{thm}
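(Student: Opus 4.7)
The plan is to argue by contradiction: assume that every $2$-coloured matching has size at most $k-1$. For each unordered pair of colours $\{i,j\} \subset \{1,2,3\}$ with complement $\ell$, fix a maximum matching $M_{ij}$ using only colours $i$ and $j$, and set $U_{ij} := V \setminus V(M_{ij})$. By maximality of $M_{ij}$, every triple contained in $U_{ij}$ must have colour $\ell$, so $U_{ij}$ carries a monochromatic colour-$\ell$ sub-hypergraph. From $|M_{ij}| \leq k-1$ one obtains $|U_{ij}| \geq n - 3(k-1) \geq 3 + \lfloor (k-1)/4 \rfloor$; on the other hand $|U_{ij}| < 3k$, for otherwise $U_{ij}$ alone would contain a monochromatic matching of size $k$. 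Finally, any triple in $U_{ij} \cap U_{i'j'}$ for two different pairs would have to carry two distinct colours simultaneously, so the sets $U_{12}, U_{13}, U_{23}$ pairwise intersect in at most two vertices.

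Next I would turn these observations into numerical constraints. Writing $M_{ij}^{(c)}$ for the colour-$c$ triples of $M_{ij}$, the union of $M_{ij}^{(i)}$ with any matching inside $U_{ij}$ is a $\{i,\ell\}$-coloured matching, so $|M_{ij}^{(i)}| + \lfloor |U_{ij}|/3 \rfloor \leq k-1$, and the analogous bound holds with $j$ in place of $i$. Adding them and using $3|M_{ij}| = n - |U_{ij}|$ yields, for each choice of $\{i,j\}$, an upper bound on $n$ in terms of $|U_{ij}|$; combining the three such bounds with the pairwise-intersection constraints already produces an upper bound on $n$ that is linear in $k$ but still off by a multiplicative constant from the sharp estimate.

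To recover the precise threshold I would use a swap argument inside each $U_{ij}$. Given $T = \{a,b,c\} \in M_{ij}$ and $v \in U_{ij}$, if any of the three triples $\{v\} \cup (T \setminus \{x\})$ has colour $i$ or $j$, one may replace $T$ by it, so that $v$ joins $V(M_{ij})$ while the ejected $x \in T$ joins $U_{ij}$; the new $U_{ij}$ remains a colour-$\ell$ clique on an almost-unchanged vertex set. Iterating such swaps constrains which triples across the boundary of $U_{ij}$ can be coloured $i$ or $j$, and propagates the clique structure inward, mirroring the $1{:}3{:}9$ extremal construction in which each rare colour-$\ell$ vertex can ``absorb'' only about three vertices of the other types before a $2$-coloured matching of size $k$ is forced to appear. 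I expect the main obstacle to be orchestrating these swaps consistently across the three pairs $\{i,j\}$ without spoiling the bounds already established, and tracking residues modulo $3$ (and modulo $4$) sharply enough to recover the exact floor $\lfloor (k-1)/4 \rfloor$ rather than an asymptotically equivalent but off-by-one bound.
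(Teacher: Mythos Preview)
Your proposal is a sketch rather than a proof, and the gap you yourself flag is real. The counting you set up can be pushed no further than $n \leq \tfrac{9}{2}(k-1)+O(1)$: summing the six inequalities $|M_{ij}^{(i)}|+\lfloor |U_{ij}|/3\rfloor \le |M_{i\ell}|$ gives $2\sum_{ij}\lfloor |U_{ij}|/3\rfloor \le \sum_{ij}|M_{ij}|\le 3(k-1)$, and together with $\sum |U_{ij}|=3n-3\sum |M_{ij}|$ this yields exactly $2n\le 9(k-1)+O(1)$. The pairwise intersection bound $|U_{ij}\cap U_{i'j'}|\le 2$ gives the same $n\le \tfrac{9}{2}(k-1)+3$ via inclusion--exclusion, so it adds nothing. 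This is a genuine multiplicative gap from the target $\tfrac{13}{4}(k-1)$, not a rounding issue.

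The swap argument as you describe it does not close the gap. Exchanging a vertex of a triple $T\in M_{ij}$ for some $v\in U_{ij}$ leaves both $|M_{ij}|$ and $|U_{ij}|$ unchanged, so no new inequality on these sizes appears; what you would actually need is a mechanism that forces many \emph{additional} disjoint $\{i,j\}$-triples meeting $U_{ij}$, and nothing in the sketch produces that. Saying the swaps ``mirror the $1{:}3{:}9$ construction'' is a description of the goal, not an argument; in the extremal colouring the three sets $U_{ij}$ all have size about $(k-1)/4$, and your inequalities are far from tight there, so some genuinely new structural input is required.

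The paper's proof proceeds along an entirely different axis. It first removes a maximal family of disjoint \emph{universal} sets (sextuples and $13$-sets that admit a near-perfect matching in every pair of colours); this is where the constant $13$ enters, since a universal $13$-set contributes $4$ to the matching while costing $13$ vertices. On the remainder one may then assume that no universal sextuple or $13$-set exists, and this absence is exploited through the notion of a \emph{spread} (a colour-dominated sextuple witnessing both non-dominant colours). A coupling lemma shows that two disjoint spreads of different colours force a near-perfect $2$-coloured matching; the remaining cases (no spreads, or all spreads of one colour) are handled by separate structural arguments. None of this is visible from the maximum-matching leftovers $U_{ij}$, and I do not see how to recover the sharp $\lfloor (k-1)/4\rfloor$ term without some device playing the role of the universal $13$-sets.
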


The case $k=4$ (the first case that is not a trivial consequence of the
existing results for the monochromatic problem, see e.g. \cite{Alonetal})
was confirmed at the workshop by a team consisting of N. Bushaw, A.
Gy\'arfas, D. Gerbner, L. Merchant, D. Piguet, A. Riet, D. Vu and the
author:

\begin{thm}[\cite{kozos}]\label{thm:kozos}
For any $3$-colouring of the complete $3$-uniform hypergraph on $12$
vertices there exists a perfect matching that uses at most $2$ colours.
\end{thm}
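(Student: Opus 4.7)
The plan is to suppose for contradiction that the $3$-colouring admits no $2$-coloured perfect matching on the $12$ vertices. Equivalently, for each pair of colours $\{i,j\}$, the spanning subhypergraph $H_{ij}$ of edges coloured $i$ or $j$ has matching number at most $3$; in particular, each single colour class has matching number at most $3$ as well.

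First I would produce a $2$-coloured matching of size $3$ to serve as a skeleton. By pigeonhole some colour class contains at least $\lceil \binom{12}{3}/3 \rceil = 74$ edges, and a short counting/greedy argument (or else a direct monochromatic matching of size $4$, which would already finish the proof) yields a matching of size at least $3$ in one of the three colours. Relabelling if necessary, let $M = \{e_1, e_2, e_3\}$ be a $2$-coloured matching in colours $\{1,2\}$, and let $T = \{x, y, z\}$ be the three remaining vertices. If the triple $f = \{x,y,z\}$ has colour $1$ or $2$, then $M \cup \{f\}$ is already the required perfect matching, so we may assume $f$ has colour $3$.

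The heart of the proof is a swap/augmentation analysis. For each $e_i = \{a_i, b_i, c_i\}$ and each $v \in T$, the three triples obtained by replacing one vertex of $e_i$ by $v$ each carry a colour; combined with the triples one can form inside $T \cup \{a_i, b_i, c_i\}$, this provides many avenues to reassemble a perfect $2$-coloured matching. The assumption that every such attempt fails, applied simultaneously to all three colour pairs $\{1,2\}$, $\{1,3\}$, $\{2,3\}$, should impose rigid constraints on the colouring of the nine cross triples between $M$ and $T$ as well as on the ``diagonal'' triples that use vertices from two different $e_i$'s.

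The main obstacle is the bookkeeping of this case analysis: after symmetry reductions there remain many colour patterns on the cross and diagonal triples to consider, and in each the task is either to exhibit a $2$-coloured perfect matching explicitly or to derive a contradiction with the assumption that $f$ has colour $3$. I expect the successful organisation to begin by choosing a canonical form for $M$ (for instance with each $e_i$ monochromatic, or with a prescribed split between colours $1$ and $2$) and to conclude by reducing matters to the extremal construction with vertex proportions close to $1:3:(n-4)$, which on $n = 12$ vertices can be ruled out directly.
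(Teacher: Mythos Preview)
The paper does not prove this theorem at all: Theorem~\ref{thm:kozos} is quoted from the companion paper \cite{kozos} and is used here only as a black box (in the $m=6,9$ subcase and in the $m=10$ subcase of the final case analysis). There is therefore no ``paper's own proof'' against which to compare your attempt.

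As for the attempt itself, it is a plan rather than a proof, and you say as much: the ``main obstacle'' of carrying out the swap/augmentation case analysis is precisely the entire content of the argument, and none of it is actually executed. Two concrete points where the outline is already shaky:
\begin{itemize}
\item The opening step is not justified. Having at least $74$ edges in one colour class does \emph{not} force a matching of size $3$ in that colour: the $3$-uniform hypergraph on $12$ vertices consisting of all $100$ triples meeting a fixed pair of vertices has matching number $2$. You would need a separate argument (for instance via Alon--Frankl--Lov\'asz type bounds, or a direct analysis) to secure even the initial $2$-coloured matching of size $3$.
\item The closing sentence about ``the extremal construction with vertex proportions close to $1:3:(n-4)$'' is not a reduction that has been set up anywhere in the sketch; it reads as a hope rather than a step.
\end{itemize}
If you want to supply an independent proof, you will have to actually perform the case analysis (this is what \cite{kozos} does, and it is not short), or find a genuinely different structural shortcut.
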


In this paper, we prove Theorem \ref{thm:preMain} in the following
equivalent form:

\begin{thm}\label{thm:main}
For any $3$-colouring of the complete $3$-uniform hypergraph on $n$
vertices, there exists a $2$-coloured matching of size
\begin{equation}\label{eqn:n2k}
m(n)=\left\lfloor \frac{4(n+1)}{13} \right\rfloor.
\end{equation}
\end{thm}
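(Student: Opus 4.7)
My plan is to prove Theorem~\ref{thm:main} by strong induction on $n$, pivoting on Theorem~\ref{thm:kozos} as the indispensable base step. Since $m(n) = \lfloor 4(n+1)/13 \rfloor$ is piecewise constant with unit jumps exactly at $n = 13j - 1$, the essential content is: whenever $n \geq 13j - 1$, any $3$-colouring of $K_n^{(3)}$ contains a $2$-coloured matching of size $4j$; the base case $j = 1$ is precisely Theorem~\ref{thm:kozos}.

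For the inductive step, let $M$ be a maximum $2$-coloured matching in a given $3$-colouring of $K_n^{(3)}$; assume without loss of generality that $M$ uses colours $1$ and $2$, write $|M_i|$ for the number of its colour-$i$ edges, set $m = |M_1|+|M_2|$ and $U = V \setminus V(M)$. Maximality immediately forces every triple inside $U$ to have colour $3$. An exchange argument --- $t$ disjoint colour-$3$ edges of $U$ together with $M_1$ form a $(1,3)$-matching of size $|M_1| + t$, which by maximality is at most $m$, and symmetrically for $M_2$ --- gives $\mu_3(U) \leq \min(|M_1|,|M_2|) \leq m/2$, where $\mu_3(U)$ denotes the largest colour-$3$ matching inside $U$. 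Since $K_U^{(3)}$ is monochromatic of colour $3$, $\mu_3(U) = \lfloor |U|/3 \rfloor = \lfloor (n-3m)/3 \rfloor$, so $\lfloor (n-3m)/3 \rfloor \leq m/2$ and hence $m \geq 2n/9$ --- a first step, but well short of the target $m \geq m(n) \approx 4n/13$.

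To close the remaining gap, I would analyse the triples $\{u,v,x\}$ with $u,v \in U$ and $x \in V(M)$. Each $e = \{x,y,z\} \in M_i$ imposes an augmentation rule: if for some disjoint pairs $\{u,v\}, \{u',v'\} \subset U$ the triples $\{x,u,v\}$ and $\{y,u',v'\}$ both carry colours from $\{1,2\}$, one can replace $e$ by these two triples to produce a $2$-coloured matching of size $m+1$, contradicting maximality. Forbidden augmentations of this and similar shapes impose a rigid colour pattern on the ``two-in-$U$'' triples; combined with Theorem~\ref{thm:kozos} applied to judiciously chosen $12$-vertex subsets of $V(M) \cup U$ --- where the guaranteed $2$-coloured perfect matching should be selectable to use the pair $\{1,2\}$ --- these constraints should propagate to yield $m \geq m(n)$.

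The principal obstacle is exactly this colour-compatibility issue: Theorem~\ref{thm:kozos} supplies a $2$-coloured perfect matching on any $12$-vertex subset, but the colour pair it uses is not \emph{a priori} under our control. Handling it will likely require case analysis according to which pair appears (with the earlier exchange inequalities used to rule out the ``wrong'' pair by producing a larger 2-coloured matching), together with a short separate verification for a finite list of small $n$ or residues modulo $13$ to absorb the floor function in $m(n) = \lfloor 4(n+1)/13 \rfloor$.
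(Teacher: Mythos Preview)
Your proposal establishes the weaker bound $m \gtrsim 2n/9$ correctly, but stops precisely where the real difficulty begins: you do not close the gap to $4n/13$. The sentences ``these constraints should propagate to yield $m \ge m(n)$'' and ``will likely require case analysis'' are promissory notes, not arguments, and the colour-compatibility obstacle you yourself flag is genuine and unresolved. In the extremal colouring (vertex classes of sizes roughly $n/13$, $3n/13$, $9n/13$, each edge coloured by the smallest class it meets), a maximum $\{1,2\}$-matching has $|M_1|\approx n/13$, $|M_2|\approx 3n/13$ and $|U|\approx n/13$; your exchange inequality $\lfloor |U|/3\rfloor \le \min(|M_1|,|M_2|)$ is then slack by a factor of three, so the single-edge augmentations you sketch would have to recover essentially the entire gap. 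You give no mechanism for this, and the induction hypothesis is never actually invoked, so the ``strong induction on $n$'' framing is decorative rather than structural.

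The paper's proof avoids your colour-compatibility problem by a different reduction. It first strips off a maximal family of disjoint \emph{universal} sets --- sextuples and $13$-sets that admit a near-perfect matching in \emph{every} pair of colours --- noting that each removal costs at most as much in $m(n)$ as it contributes in matching edges. On the remainder, the absence of universal sets is exploited via \emph{spreads} (an $\alpha$-dominated sextuple that splits into colours $\{\alpha,\alpha{+}1\}$ and also into $\{\alpha,\alpha{-}1\}$): a coupling lemma shows that two disjoint spreads of different colours already force a near-perfect $2$-coloured matching, and a separate structural proposition handles the case where all spreads share one colour. Theorem~\ref{thm:kozos} and the Alon--Frankl--Lov\'asz bound enter only at the very end, on a leftover of at most ten vertices. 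The device your plan lacks is exactly this universality reduction: because the removed pieces can be matched in \emph{any} colour pair, one never has to control which two colours Theorem~\ref{thm:kozos} hands back.
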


It is easy to check that these indeed are formulations of the same result as
$$
n = 3k+\left\lfloor \frac{k-1}{4} \right\rfloor = \left\lfloor \frac{13k-1}{4}
\right\rfloor
$$
is the smallest integer for which $\left\lfloor \frac{4(n+1)}{13}
\right\rfloor \geq k$ holds.

\section{Proof}
In the proof, the set of vertices of the hypergraph will be denoted by $V$,
the colouring will be a function $c: {V \choose 3} \to \{ 1, 2, 3 \}$, and
$\alpha$, $\beta$, $\gamma$ will be an arbitrary permutation of the colours
$1$, $2$, $3$. The colours are shifted cyclically, e.g. if $\alpha=3$, then
$\alpha+1$ denotes the colour $1$ and if $\alpha=1$, then $\alpha−1$ denotes
the colour $3$. A matching on $n$ vertices is {\it near perfect}, if it has
size $\lfloor n/3 \rfloor$.
\par
We call a sextuple $A$ of points {\it $\alpha$-dominated} for a colour
$\alpha=1,2,3$, if for all splittings $A = B_1 \cup B_2$ into two disjoint
triples at least one of $c(B_1)=\alpha$ or $c(B_2)=\alpha$ holds. If $A$ is
not $\alpha$-dominated for any $\alpha$, we call it {\it universal}.
Similarly, we call a set $X$ of $13$ points universal if it admits a near
perfect matching in any pair of colours.
\par
The proof proceeds by taking a maximal set of disjoint universal sets $A_1,
\dots, A_l$, $X_1, \dots, X_m$ with $\vert A_i \vert = 6$ and $\vert X_j \vert
=13$. If we can now construct a $2$-colour matching on $W = V \setminus (A_1
\cup \dots \cup X_m)$ of the size $m(\vert W \vert)$, then we can extend it
by the appropriately coloured near perfect matchings in the universal sets
$A_i$ and $X_j$ and keep the size of the matching at least $m(\vert V
\vert)$. Indeed, in the case of an $A_i$ decreasing $n$ by $6$ decreases
$m(n)$ by at most $\lceil 4 \cdot 6 /13 \rceil = 2$ and in the case of an
$X_j$ decreasing $n$ by $13$ decreases $m(n)$ by $4$. Thus by switching to
$W$ we may assume that there are no universal sets of size $6$ or $13$, and
the resulting structural properties of the colouring will give us the
necessary large $2$-colour matching.
\par
If a vertex sextuple is $\alpha$-dominated, and there are splittings of it
into hyperedges of colours $\alpha$ and $\alpha+1$ as well as into those of
colours $\alpha$ and $\alpha+2$, we call this sextuple a {\it spread} in
colour $\alpha$, and the splittings are its {\it demonstration splittings}.
Depending of whether the hyperedges of colour $\alpha$ in the demonstration
splittings overlap in $1$ or $2$ vertices, we assign the spread (with a
fixed demonstration splitting implied) a level of $1$ or $2$ respectively.

\begin{lemma}\label{thm:coupling}
Assume that there are two disjoint spreads $A$ in colour $\alpha$ and $B$ in
colour $\beta$ such that $\alpha \neq \beta$, and let $v$ be an arbitrary
vertex in the complement $V \setminus (A \cup B)$ of their union. Then the
following property holds for $X=A$ or for $X=B$ (or both): if we substitute
$v$ for any vertex of the dominantly coloured triple in either
demonstration splitting of the spread $X$, the colour of that triple stays
the same, the dominating colour of $X$.
\end{lemma}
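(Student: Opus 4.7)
The plan is to prove the lemma by contradiction. Suppose the property fails for both $A$ and $B$; I will show that the $13$-set $A \cup B \cup \{v\}$ (which has size $6+6+1 = 13$ since the three pieces are disjoint) is universal, contradicting the maximality assumption after the reduction to $W$. Consequently at least one of $A$ or $B$ must satisfy the property.

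Denote the third colour by $\gamma$, so $\{\alpha,\beta,\gamma\} = \{1,2,3\}$. The spread structure gives me four size-$2$ matchings ``for free'': the two demonstration splittings of $A$ yield size-$2$ matchings on $A$ in the colour pairs $\{\alpha,\beta\}$ and $\{\alpha,\gamma\}$, while those of $B$ yield size-$2$ matchings on $B$ in $\{\alpha,\beta\}$ and $\{\beta,\gamma\}$. Concatenating the two $\{\alpha,\beta\}$ matchings already gives a size-$4$ near perfect matching in that pair on $A \cup B \cup \{v\}$ (leaving $v$ uncovered). What is still missing are a size-$2$ matching in $\{\beta,\gamma\}$ on some subset of $A \cup \{v\}$ to pair with $B$'s $\{\beta,\gamma\}$-matching, and a size-$2$ matching in $\{\alpha,\gamma\}$ on some subset of $B \cup \{v\}$ to pair with $A$'s $\{\alpha,\gamma\}$-matching.

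This is precisely what the two failures supply. If $A$ fails, pick a demonstration splitting $(P_1,P_2)$ of $A$ with $c(P_1)=\alpha$ and a vertex $u_A \in P_1$ such that the substituted triple $T_A := (P_1 \setminus \{u_A\}) \cup \{v\}$ has $c(T_A) \neq \alpha$. Since $T_A$'s colour then lies in $\{\alpha+1,\alpha+2\} = \{\beta,\gamma\}$ and $c(P_2) \in \{\beta,\gamma\}$ by the definition of demonstration splitting, $\{T_A, P_2\}$ is a size-$2$ matching on $A \cup \{v\} \setminus \{u_A\}$ whose edges both have colours in $\{\beta,\gamma\}$. The symmetric argument applied to $B$ produces, from a vertex $u_B$ in the dominant triple of some demonstration splitting of $B$, a size-$2$ matching on $B \cup \{v\} \setminus \{u_B\}$ in colours $\{\beta+1,\beta+2\} = \{\alpha,\gamma\}$.

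Assembling: the $\{\alpha,\beta\}$-matching omits $v$, the $\{\beta,\gamma\}$-matching (from $A$-failure combined with $B$'s spread matching) omits $u_A$, and the $\{\alpha,\gamma\}$-matching (from $B$-failure combined with $A$'s spread matching) omits $u_B$. All three are size $4$, so $A \cup B \cup \{v\}$ is universal, contradicting the reduction to $W$. The main thing to watch is the colour bookkeeping showing that when the substituted triple's colour is forced to differ from the dominating colour $\alpha$ (resp.\ $\beta$), it automatically falls in the pair of non-dominant colours of the spread, so that together with the partner triple of the chosen splitting it yields a matching in exactly the pair we need; this is immediate from the $\alpha$-domination of $A$ and $\beta$-domination of $B$, so no genuine obstacle arises beyond this verification.
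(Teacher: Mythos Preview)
Your proof is correct and follows essentially the same approach as the paper's: assume the property fails for both spreads, then exhibit $A \cup B \cup \{v\}$ as a universal $13$-set by combining the demonstration splittings of $A$ and $B$ with the two ``spoiled'' triples obtained from the failures. The only cosmetic point is that your closing appeal to $\alpha$- and $\beta$-domination is unnecessary---the colour bookkeeping only needs $c(T_A)\neq\alpha$ (from the failure) and $c(P_2)\neq\alpha$ (by definition of a demonstration splitting), and likewise for $B$.
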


\par
\begin{figure}
\centering
\includegraphics[width=12.5cm]{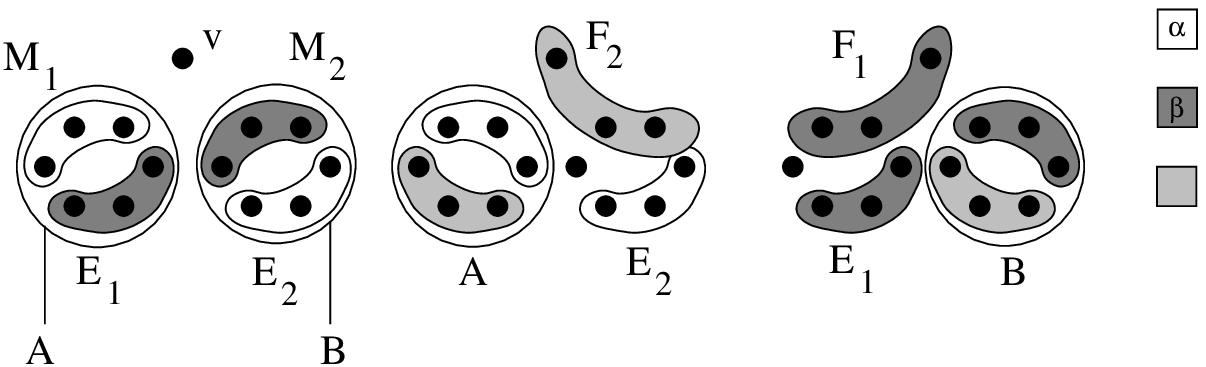}
\caption{If the substitution of a single vertex $v$ can change the colour of a
dominant triple in two spreads of different colour, we have a universal set
of size $13$. Example: case when $c(E_1)=\beta$ and $c(E_2)=\alpha$.}
\label{fig:spread_match}
\end{figure}
\par

\begin{proof}
Indirectly assume that $A=M_1 \cup E_1$ and $B=M_2 \cup E_2$ are splittings
which v ``spoils''. That is, $M_1$ has the dominant colour $\alpha$ of $A$
and $M_1 \cup \{ v \}$ contains a triple $F_1$ of colour different from $c(M_1) =
\alpha$, and analogously $c(M_2) = \beta$ and $M_2 \cup \{ v \}$ contains a
triple $F_2$ of colour different from $\beta$ (see example on Figure
\ref{fig:spread_match}). Then $A \cup B \cup \{ v \}$
is a universal set of size $13$. Indeed, it has a matching of size $4$ that
contains only colours $\alpha$ and $\beta$ since both $A$ and $B$ admit
perfect matchings in these colours. It also has a matching of size $4$ that
avoids the colour $\alpha$: the spread $B$ has such a matching of size $2$,
the triple $E_1$ has a colour different from $\alpha$, and the remainder $M_1
\cup \{ v \}$ contains $F_1$, also a triple of a colour different from
$\alpha$. The same argument with $A$ and $B$ reversed produces a near
perfect matching that avoids the colour $\beta$, proving our claim and
arriving at the contradiction that proves the lemma.
\end{proof}

This coupling property implies a very rigid structure of the colouring:

\begin{prop}\label{thm:twoSpreads}
If there is a pair of disjoint spreads in two different colours, then there
is a nearly perfect matching avoiding one colour.
\end{prop}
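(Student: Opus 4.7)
The plan is to construct a matching of size $\lfloor n/3 \rfloor$ on $V$ using only the colours $\alpha$ and $\beta$. There is essentially no choice which colour must be avoided: since $A$ is $\alpha$-dominated every splitting of $A$ contains an $\alpha$-triple, and symmetrically every splitting of $B$ contains a $\beta$-triple, so no size-$4$ matching on $A \cup B$ can avoid $\alpha$ or $\beta$; only avoiding $\gamma$ is feasible.

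First I would exhibit the matching on $A \cup B$ alone: because $\beta \in \{\alpha+1, \alpha+2\}$, one of $A$'s two demonstration splittings is into triples of colours $\alpha$ and $\beta$, and symmetrically one of $B$'s splittings is into $\beta$ and $\alpha$. Together this is a $2$-coloured matching of size $4$. Next, Lemma~\ref{thm:coupling} applied at each external vertex $v$ in $W = V \setminus (A \cup B)$ shows that $v$ is good for $A$ or for $B$, so I may partition $W = W_A \sqcup W_B$. By the conclusion of the lemma, for each $v \in W_A$ the triples $(M \setminus \{x\}) \cup \{v\}$ are all of colour $\alpha$ whenever $M$ is one of $A$'s $\alpha$-dominant triples and $x \in M$; symmetrically for $W_B$.

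The main step is to grow the base matching into $W$ by swapping external vertices into the spreads. For $v \in W_A$, replace some $x$ of the chosen $\alpha$-triple of $A$ by $v$: the new triple is again $\alpha$-coloured, and the sextuple $A' = (A \setminus \{x\}) \cup \{v\}$ is again a spread in $\alpha$ with the inherited demonstration splittings. The ejected vertex $x$ falls into the new exterior, but the coupling lemma reapplies to the pair $(A', B)$, so $x$ can be re-routed either back into $A'$ or across to $B$ for further processing. Iterating this construction, symmetrically for $W_B$, should produce a matching in colour $\alpha$ on $A \cup W_A$ and one in colour $\beta$ on $B \cup W_B$; the aggregate is the desired near-perfect matching in $\{\alpha, \beta\}$.

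The hardest part of the plan is the bookkeeping of this iteration: the controlled $\alpha$-pairs in a spread span only $4$ or $5$ of its $6$ vertices (depending on whether it is of level $2$ or $1$), so one cannot absorb many external vertices in a single pass, and the ejected vertices must never be combined into a $\gamma$-coloured triple. A careful case analysis, indexed by the two spread levels and by the residues of $|W_A|$ and $|W_B|$ modulo $3$, is what is needed to show that the iteration terminates with at most $n - 3\lfloor n/3 \rfloor$ vertices uncovered, completing the near-perfect matching.
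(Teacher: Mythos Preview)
Your opening observations are correct and match the paper: the only colour one can hope to avoid is $\gamma$, the demonstration splittings of $A$ and $B$ already give a size-$4$ matching on $A\cup B$ in colours $\{\alpha,\beta\}$, and Lemma~\ref{thm:coupling} does partition the exterior into vertices ``good for $A$'' and vertices ``good for $B$''. The swapped sextuple $A'$ also retains its two demonstration splittings, so the coupling lemma can indeed be reapplied.

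The gap is in the main step. The coupling lemma only tells you that a good vertex $v$ forms $\alpha$-triples with the \emph{critical pairs} of the spread; it says nothing about triples whose three vertices all lie in $W_A$. Your iteration absorbs one external vertex at a time into a six-vertex spread and ejects another, so at any moment only a bounded number of external vertices participate in the controlled $\alpha$-triples. To cover a large $W_A$ by an $\alpha$-matching you would need $\lfloor |W_A|/3\rfloor$ disjoint $\alpha$-triples, most of them lying entirely inside $W_A$, and nothing in your scheme produces those. The sentence ``iterating this construction \dots\ should produce a matching in colour $\alpha$ on $A\cup W_A$'' is precisely the unproved assertion; the closing paragraph concedes as much.

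What is actually needed---and what the paper supplies---is the statement that the extended dominating set $\hat A^{(i)}$ (morally your $W_A$ together with the dominating vertices) is a \emph{monochromatic clique} in colour $i$. This is Lemma~\ref{cliques}, and its proof is not bookkeeping: it uses the standing hypothesis that $V$ contains no universal sextuple, and in the level-$1$ case it also uses that the pair $(A^{(1)},A^{(2)})$ was chosen with the maximum number of level-$2$ spreads, to rule out a hidden level-$2$ spread. Your proposal invokes neither of these assumptions. A second ingredient you are missing is Lemma~\ref{thm:cliques2matching}, which turns two disjoint monochromatic cliques into a near-perfect $2$-coloured matching; the bad residue case there is again resolved by exhibiting a would-be universal sextuple, not by pure parity bookkeeping. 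Without these two structural lemmas the iteration cannot be closed.
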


\begin{proof}
Without loss of generality we may assume that the two colours are $1$ and
$2$. Let the two spreads be $A^{(1)}$ (colour $1$) and $A^{(2)}$ (colour
$2$), and out of all disjoint pairs of spreads of colours $1$ and $2$ this
one contains the most level $2$ spreads. Then each of them is either level
$2$ or it is level $1$ and there are no level $2$ spreads of their colour
that would be disjoint from the other spread.
\par
In both $A^{(1)}$ and $A^{(2)}$, fix two demonstration splittings
$$A^{(i)} = M_+^{(i)} \cup P^{(i)} = M_-^{(i)} \cup N^{(i)}$$
such that $c(M^{(i)}_+) = c(M^{(i)}_-) = i$, $c(P^{(i)}) = i+1$ and
$c(N^{(i)})=i-1$. Depending on the level of $A^{(i)}$, we can label the
vertices of $A^{(i)} = \{ a^{(i)}_1, \dots,a^{(i)}_6\}$ to satisfy the
following equalities:
\begin{itemize}
\item in case of level $1$:
\begin{align*}
M^{(i)}_+= \{ v^{(i)}_1,v^{(i)}_2,v^{(i)}_3 \} & \qquad & P^{(i)}=
\{ v^{(i)}_4,v^{(i)}_5,v^{(i)}_6 \} \\
M^{(i)}_-= \{ v^{(i)}_1,v^{(i)}_4,v^{(i)}_5 \} & \qquad & N^{(i)}=
\{ v^{(i)}_2,v^{(i)}_3,v^{(i)}_6 \}
\end{align*}
We will call the vertex $v^{(i)}_1$ the {\it dominating} vertex and the rest
of the vertices the {\it core} vertices.

\item in case of level $2$:
\begin{align*}
M^{(i)}_+= \{ v^{(i)}_1,v^{(i)}_2,v^{(i)}_3 \} & \qquad & P^{(i)}=
\{ v^{(i)}_4,v^{(i)}_5,v^{(i)}_6 \} \\
M^{(i)}_-= \{ v^{(i)}_1,v^{(i)}_2,v^{(i)}_4 \} & \qquad & N^{(i)}=
\{ v^{(i)}_3,v^{(i)}_5,v^{(i)}_6 \}
\end{align*}
We will call the vertices $v^{(i)}_1$ and $v^{(i)}_2$ the {\it dominating}
vertices and the rest of the vertices the {\it core} vertices.
\end{itemize}
In both cases, $D^{(i)}$ will denote the set of the dominating vertices and
$C^{(i)}$ will denote the set of the core vertices. A pair of vertices will
be called {\it critical}, if they are contained in either $M^{(i)}_+$ or
$M^{(i)}_-$.
\par
We choose sets $\hat A^{(1)}$ and $\hat A^{(2)}$ to be a maximal disjoint
pair of sets satisfying the following properties:
\begin{itemize}
\item $D(i) \subseteq \hat A^{(i)} \subseteq V \setminus \left( C^{(i)} \cup
A^{(3-i)} \right)$ for $i=1,2$.
\item For any subset $D$ of $\hat A^(i)$ of size $\vert D \vert = \vert
D^{(i)} \vert$, the triples of the sextuple $C^{(i)} \cup D = \left( A^{(i)}
\setminus D^{(i)} \right) \cup D$ complementary to $P^{(i)}$ and $N^{(i)}$
have colour $i$.
\item For any subset $D$ of $\hat A^{(i)}$ of size $\vert D \vert = \vert
D^{(i)} \vert$, any pair of vertices $(u,v) \in V$ that is covered by the
complement of either $P^{(i)}$ or $N^{(i)}$ in the sextuple $C^{(i)} \cup D
= \left( A^{(i)} \setminus D^{(i)} \right) \cup D$, and any vertex $w \in
\hat A^{(i)} \setminus D$, the triple $\{ u,v,w \}$ has colour $i$.
\end{itemize}
That is, $\hat A^{(i)}$ is a maximal set of vertices (outside of $\hat
A^{(3-i)}$) extending the set of dominant vertices of $A^{(i)}$ such that we
can switch the dominant vertices of $A^{(i)}$ with any two vertices of $\hat
A^{(i)}$ and still be unable to change the colour of the dominant triples of
the modified sextuple by a single vertex change within the set $\hat
A^{(i)}$. Such sets exist (for example, $D^{(i)}$ satisfies the requirements
for $\hat A^{(i)}$), and their total size is bounded by $\vert V \vert$, so
we can choose a maximal pair.
\par
We claim that the sets $\hat A^{(1)} \cup \hat A^{(2)}$ cover $V \setminus
(C^{(1)} \cup C^{(2)})$. Indeed, assume that there is a vertex $w \in V
\setminus \left( C^{(1)} \cup C^{(2)} \cup \hat A^{(1)} \cup \hat A^{(2)}
\right)$ such that it cannot be added to either $\hat A^{(1)}$ or $\hat
A^{(2)}$ without violating their defining properties. This means that for
$i=1,2$ we can switch the vertices in $D^{(i)}$ to some other vertices in
$\hat A^{(i)}$ in such a way that for the resulting spread $\tilde A^{(i)}$
there is a pair of vertices $(u^{(i)},v^{(i)})$ of a dominating triple such
that
\begin{equation*} 
c(u^{(i)},v^{(i)},w) \neq i.
\end{equation*}
This contradicts Lemma \ref{thm:coupling} for the spreads $\tilde A^{(1)}$
and $\tilde A^{(2)}$ and the vertex $w$.
\par
Additionally, these sets are already easy to colour with $2$ colours:

\begin{lemma}\label{cliques}
The vertex set $\hat A^{(i)}$ is a clique of colour $i$.
\end{lemma}

\begin{proof}
We suppress for brevity the indices ${}^{(i)}$. If $A$ is level $2$, then
for any $\{ x,y,z \} \subseteq \hat A$ we have by definition of $\hat A$ the
property that $z$ forms triples of colour $i$ with all the critical vertex
pairs of $C \cup \{ x, y \}$, in particular, with $\{ x, y \}$, and we are
done.
\par
If $A$ is level $1$, recall first that we also assume that there are no
spreads of colour $i$ and level $2$ in $\hat A \cup C$. Indirectly assume
furthermore that there is a triple $X = \{ x_1, x_2, x_3 \} \subseteq \hat
A$ such that its colour is not $i$. For symmetry reasons it is enough to
check the case when $c(X)=i+1$. Then $P \cup X$ is covered by two disjoint
triples of colour $i+1$ and must be therefore $i+1$-dominated - otherwise
it would form a universal sextuple contrary to our assumptions. But $P
\setminus N = \{ v_4, v_5 \}$ is a critical pair of vertices and hence $\{
x_1, v_4, v_5 \}$ has colour $i$; therefore its complement $Y = \{ x_2, x_3,
v_6 \}$ has colour $i+1$. This implies that the sextuple $X \cup N = Y \cup
\{ x_1, v_2, v_3 \}$ can be split into colours $c(X)=i+1$ and $c(N)=i-1$ as
well as into colours $c(Y) = i+1$ and $c(\{ x_1, v_2, v_3 \}) = i$ (the set
$\{ v_2, v_3, x_1 \}$ is the complement of $P$ in $C \cup \{ x_1 \}$ with
$x_1 \in \hat A$), so this sextuple is $i+1$-dominated. Now use the fact
that $\{ x_2, v_3 \}$ is a critical pair of vertices in $C \cup \{ x_2 \}$
(it lies in the complement of $P$). This implies that $c(\{ x_2, x_3, v_3
\}) = i$, and consequently its complement in $X \cup N$ has colour $i+1$:
$$
c(\{ x_1, v_2, v_6 \}) = i+1.
$$
By definition of $\hat A$, the sextuple $\{ x_1 \} \cup C$ is $i$-dominant
as it cannot be dominant in any other colour. Hence the complement of $\{
x_1, v_2, v_6 \}$ in it has to have colour $i$:
$$
c(\{ v_3, v_4, v_5 \}) = i.
$$
Also, $\{ v_4, v_5 \}$ is a critical pair of vertices, so we have
$$
c(\{ x_1, v_4, v_5 \}) = i
$$
as well. But this means that $C \cup \{ x_1 \}$ is a level $2$ spread of
colour $i$ as evidenced by splitting into $\{ x_1, v_4, v_5 \} \cup N$
(colours $i$ and $i-1$ respectively) and into $\{ v_3, v_4, v_5 \} \cup \{
x_1, v_2, v_6 \}$ (colours $i$ and $i+1$ respectively) - a contradiction
with our initial assumption, hence $\hat A$ is indeed a clique of colour $i$
as claimed.
\end{proof}
\par
This also implies that $\hat A^{(1)} \cup M^{(1)}_+$ is a clique of colour
$1$ and $\hat A^{(2)} \cup M^{(2)}_-$ is a clique of colour $2$ (we are
adding a vertex or a critical pair of vertices to the appropriate $\hat
A^{(i)}$). Notice that their complement is the union of the $2$-coloured
hyperedge $P^{(1)}$ and the $1$-coloured hyperedge $N^{(2)}$.

\begin{lemma}\label{thm:cliques2matching}
If $U$ and $W$ are disjoint cliques of colours $1$ and $2$ respectively such
that $\vert U \vert \geq 3$ and $\vert W \vert \geq 3$, then there exists an
almost perfect matching in $U \cup W$ in colours $1$ and $2$.
\end{lemma}

\begin{proof}
If $\vert U \vert + \vert W \vert \operatorname{mod} 3=\vert U \vert
\operatorname{mod} 3 + \vert W \vert \operatorname{mod} 3$, that is, $\vert
U \vert \operatorname{mod} 3 + \vert W \vert \operatorname{mod} 3 \leq 2$,
then taking maximal disjoint sets of hyperedges in $U$ and $W$ separately
gives an almost perfect matching in colours $1$ and $2$.
\par
If this is not the case, then both $\vert U \vert \operatorname{mod} 3$ and
$\vert W \vert \operatorname{mod} 3$ are at least $1$ and at least one of
them is equal to $2$; without loss of generality, we may assume that $\vert
U \vert \equiv 2 \operatorname{mod} 3$. We claim that there is a hyperedge
$E \subset U \cup W$ of colour $1$ or $2$ with the property that $\vert U
\cap E \vert=2$. Indeed, assume indirectly that all triples intersecting $U$
in $2$ vertices and $W$ in $1$ vertex have colour $3$. Since $\vert U \vert
\geq 3$ and $\vert U \vert \operatorname{mod} 3 = 2$, we have $\vert U \vert
\geq 5$. Consider any four distinct vertices $u_1, u_2, u_3, u_4 \in U$ and
any two distinct vertices $w_1, w_2 \in W$. Then the set $X = \{ u_1, u_2,
u_3, u_4, w_1, w_2 \}$ is covered by the triples $\{ u_1, u_2, w_1 \}$ and
$\{ u_3, u_4, w_3 \}$, both of which have to have colour $3$. Hence $X$ can
only be $3$-dominated, consequently at least one of the members of the
matching $\{ u_1, w_1, w_2 \} \cup \{ u_2, u_3, u_4 \}$ has colour $3$.
But the triple $\{ u_2, u_3, u_4 \}$ lies in the clique $U$ and therefore
has colour $1$, so $c(\{ u_1, w_1, w_2 \}) = 3$. This implies that for any
choice of a vertex $w_3 \in W \setminus \{ w_1, w_2 \}$ we have on one hand
$$c(\{ u_1, w_1, w_2 \})=3\text{ and } c(\{ u_2, u_3, w_3 \})=3$$
due to the latter triple intersecting $U$ in $2$ vertices, and on the other
hand
$$c(\{ u_1, u_2, u_3 \})=1\text{ and } c(\{ w_1, w_2, w_3 \})=2$$
due to $U$ and $V$ being cliques. Hence $\{ u_1, u_2, u_3, w_1, w_2, w_3 \}$
would be a universal sextuple, a contradiction that proves our claim.
\par
Given a hyperedge $E \subset U \cup W$ of colour $1$ or $2$ with the
property that $\vert U \cap E \vert=2$, we can just add it to the union of
any maximal matching of $U \setminus E$ and any maximal matching of $W
\setminus E$ to get a nearly perfect matching of $U \cup W$ in colours $1$
and $2$.
\end{proof}

Applying Lemma \ref{thm:cliques2matching} to the cliques $\hat A^{(1)} \cup
M^{(1)}_+$ and $\hat A^{(2)} \cup M^{(2)}_-$ and adding the triples
$P^{(1)}$ and $N^{(2)}$ yields a near perfect matching in colours $1$ and
$2$ on $V$. This finishes the proof of Proposition \ref{thm:twoSpreads}.
\end{proof}
\par
Once we can exclude two disjoint spreads of different colours, we have two
possibilities: either there are no spreads at all, or there is a spread of,
say, colour $1$, and any spread in its complement is also of colour $1$. We
will also assume that $\vert V \vert \geq 9$ as otherwise the $2$-colour
condition is trivially fulfilled by any near perfect matching.
\par

\par
\begin{figure}
\centering
\includegraphics[width=3.5cm]{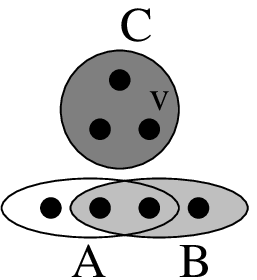}
\caption{If there are no spreads, then only two colours may be used.}
\label{fig:no_spreads}
\end{figure}
\par

\par
{\bf Case 1: there are no spreads.} If there are no spreads, then no
sextuple can contain triples of all three colours: one of them would be
dominating, and any two instances of the other two colours could be chosen
to be $P$ and $N$ of a spread. We will first look for a pair of triples of
different colours that share two vertices, $c(A) \neq c(B)$, $\vert A \cap B
\vert =2$. If there are no such pairs, then all triples have the same colour
and any nearly perfect matching is monochromatic, we are done. If, on the
other hand, such triples $A$ and $B$ exist, we may assume without loss of
generality that $c(A)=1$ and $c(B)=2$. Could there be triples of colour $3$
(see Figure \ref{fig:no_spreads})? Any such triple $C$ would have to be
disjoint from $A \cup B$, because otherwise their union $A \cup B \cup C$
(together with any other vertex if it has only $5$ elements) would form a
sextuple of vertices that contains all the three colours. Then for any
vertex $v \in C$ the triple $T = (A \cap B) \cup \{ v \}$ is covered by both
$A \cup C$ (covering only triples of colour $1$ and $3$) and $B \cup C$
(covering only triples of colour $2$ and $3$) and therefore can only be of
colour $3$. But then $A \cup B \cup \{ v \}$ together with any other vertex
form a sextuple that contains triples of all three colours, $A$, $B$ and $T$
- a contradiction.
\par
Therefore in this case only two colours may be used at all, so any near
perfect matching automatically satisfies our desired condition.
\par
{\bf Case 2: there exists a spread} (of colour $1$, say). We first
investigate what happens if there are no spreads of other colour at all.
This results in a highly ordered structure:

\begin{prop}\label{thm:oneSpread}
If a colouring is such that all spreads are of colour $1$, then either
\begin{itemize}
\item there exists a near perfect matching avoiding colour $2$ or colour $3$, or
\item there are no triples of colour $1$ at all.
\end{itemize}
\end{prop}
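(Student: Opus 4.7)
The plan is to prove the contrapositive: assume there exists a triple $T_0$ of colour~$1$ and construct a near perfect matching avoiding colour~$2$ or colour~$3$. The starting point is to extract strong local structure from the hypothesis ``all spreads are of colour~$1$'' together with the standing reduction to a colouring with no universal sextuples. A sextuple admitting a splitting of type $(2,3)$ cannot be $1$-dominated; since it is not universal, it must be $2$- or $3$-dominated; and since spreads in colours $2$ and $3$ are forbidden, its remaining splittings are then forced into $\{(2,2),(2,3)\}$ in the first subcase and $\{(3,3),(2,3)\}$ in the second. Analogously, a $2$-dominated sextuple admitting a $(1,2)$-splitting cannot admit any $(2,3)$-splitting, and symmetrically with colours $2$ and $3$ swapped. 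This yields a rigid local catalogue of permissible sextuple patterns.

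Next, I would propagate these local constraints globally using $T_0$. For any triple $B\subseteq V\setminus T_0$ of colour~$2$, the sextuple $T_0\cup B$ admits a $(1,2)$-splitting and must therefore be $1$- or $2$-dominated; the previous paragraph then forbids specific configurations of colour~$3$ inside and adjacent to $T_0\cup B$. Repeating this for every colour-$2$ triple disjoint from $T_0$, and symmetrically for colour-$3$ triples, I expect to deduce a dichotomy on $V\setminus T_0$: either every triple there has colour in $\{1,2\}$, or every triple there has colour in $\{1,3\}$. As in Proposition~\ref{thm:twoSpreads}, a maximal-configuration argument — growing a candidate region outward from $T_0$ until any further extension would contradict the local constraints — looks like the natural tool.

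Once this dichotomy is established, the construction of the matching is straightforward: any near perfect matching of $V\setminus T_0$ uses only the two allowed colours and therefore already avoids the excluded one, so adjoining $T_0$ (of colour~$1$) yields a near perfect matching of $V$ avoiding colour~$2$ or~$3$; the hypothesis $|V|\geq 9$ ensures such a matching of $V\setminus T_0$ has the right size. The main obstacle will be the propagation step: converting sextuple-level restrictions into a clean global dichotomy, and in particular ruling out hybrid configurations in which colour-$2$ and colour-$3$ triples coexist on $V\setminus T_0$. I expect such a hybrid — disjoint colour-$2$ and colour-$3$ triples positioned suitably relative to $T_0$ — to force either a spread in colour~$2$ or~$3$, or a universal sextuple, but making this rigorous will require delicate case analysis on the relative positions of the offending triples with respect to $T_0$.
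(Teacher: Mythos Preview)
Your proposed global dichotomy on $V\setminus T_0$ --- that all its triples lie in $\{1,2\}$ or all lie in $\{1,3\}$ --- is not true, and this is the heart of the difficulty. Consider a colouring with a single vertex $v_0$ such that every triple through $v_0$ has colour~$1$, while $V\setminus\{v_0\}$ is coloured only in $\{2,3\}$ (with no universal sextuples there). One checks easily that every spread must contain $v_0$ and is therefore $1$-dominated, so the hypothesis ``all spreads have colour~$1$'' holds. Any colour-$1$ triple $T_0$ necessarily contains $v_0$, whence $V\setminus T_0\subseteq V\setminus\{v_0\}$; but the latter generically carries triples of \emph{both} colours $2$ and $3$. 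So your propagation step cannot possibly succeed as stated, and the hoped-for contradiction (a forced spread in colour $2$ or $3$, or a universal sextuple) does not materialise: disjoint colour-$2$ and colour-$3$ triples coexist peacefully in $V\setminus T_0$ in this example.

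The paper's route is structurally different. Rather than fixing one triple $T_0$ and analysing its complement, it isolates \emph{$1$-forcing vertices} (vertices all of whose triples have colour~$1$), shows via a short covering lemma that such vertices exist whenever colour~$1$ appears nontrivially, and peels them off to leave a remainder $R$ coloured only in $\{2,3\}$. The crucial step you are missing is that controlling $R$ then requires the absence of universal \emph{$13$-sets}, not merely universal sextuples: three disjoint pairs of $\{2,3\}$-triples in $R$ sharing two vertices, together with one $1$-forcing vertex, would form a universal $13$-set. This bounds the ``mixed'' part of $R$ to at most two such pairs, after which the rest of $R$ is a monochromatic clique and the matching can be assembled by hand. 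Your outline never invokes the $13$-set reduction, and without it the argument cannot close.
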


\begin{proof}
First note that the condition on the spreads means that whenever a sextuple
contains triples of all three colours, it is $1$-dominated. In particular,
if a triple is covered by a disjoint union of a $2$-coloured and a
$3$-coloured triple, it cannot have colour $1$ - the union in question can
only be $2$- or $3$-dominated. We show that this statement can also be used
for non-disjoint pairs of triples of colours $2$ and $3$.

\begin{lemma}
Assume that all spreads in the colouring are of colour $1$. Then either
\begin{itemize}
\item there are no triples of colour $1$ covered by the union of a triple of
colour $2$ and a triple of colour $3$, or
\item there exists a near perfect matching avoiding colour $2$ as well as
one avoiding colour $3$.
\end{itemize}
\end{lemma}

\begin{proof}
Assume $A = \{ v_1, v_2, v_3 \}$ is a colour $1$ triple that is covered by
triples $B$ and $C$ of colours $2$ and $3$ respectively. At least one of
these has to cover $2$ vertices of $A$, so after a renumbering of colours,
triples and vertices we may assume that $B = \{ v_2, v_3, v_4 \}$ and $v_1
\in C$. We now have three cases for the situation of $C$ with respect to $A$
and $B$:

\begin{enumerate}

\item $C \cap B = \emptyset$. Then $B$ and $C$ are disjoint triples of
colour $2$ and $3$ respectively which cover $A$, a triple of colour $1$ - a
contradiction.

\par
\begin{figure}
\centering
\includegraphics[width=6cm]{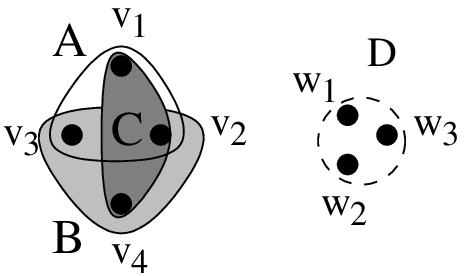}
\caption{Case $\vert C \cap B \vert = 2$.}
\label{fig:4rainbow}
\end{figure}
\par

\item $C \cap B = \{ v_2, v_4 \}$ (or analogously $\{ v_3, v_4 \}$); that
is, $C$ is covered by $A \cup B$ (see Figure \ref{fig:4rainbow}). The union
$A \cup B = A \cup B \cup C$ has $4$ elements and contains all three
colours, so adding any pair of vertices $x$, $y$ makes it a $1$-dominated
sextuple. In this sextuple, the triples $\{ x, y, v_1 \}$ and $\{ x, y, v_3
\}$ have non-$1$-coloured complements, so they have to have colour $1$
themselves. Now assume there is a triple $D = \{ w_1, w_2, w_3 \}$ of colour
$2$ disjoint from $A \cup B$ (the case of $c(D)=3$ is similar). Then $D \cup
C$ is a disjoint union of a $2$-coloured triple and a $3$-coloured one, and
it covers the $1$-coloured triple $\{ w_1, w_2, v_1 \}$ - a contradiction.
Hence all triples disjoint from $A \cup B$ have colour $1$. Consequently we
can choose a near perfect matching either in colour $1$ only, or at will in
colours $1$ and $2$, or in colours $1$ and $3$ - if the total number of
vertices is congruent to $1$ or $2$ modulo $3$, we take a near perfect
matching in the complement of $A \cup B$ and add $A$, otherwise we take a
near perfect matching in the complement of $A \cup B$, add the triple $B$ or
$C$ depending on which colour out of $2$ and $3$ is wanted and match up the
remaining two vertices with either $v_1$ or $v_3$ (whichever is left out).

\par
\begin{figure}
\centering
\includegraphics[width=8cm]{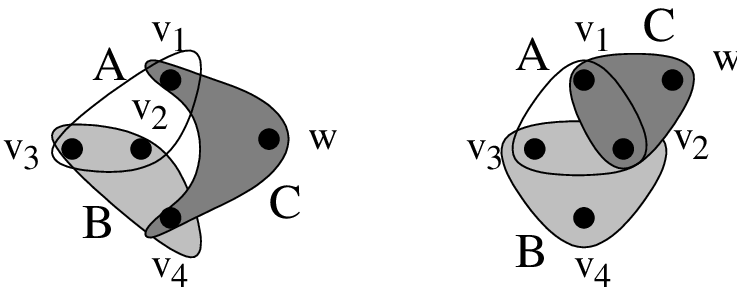}
\caption{Case $\vert C \cap B \vert = 1$.}
\label{fig:5rainbow}
\end{figure}
\par

\item $\vert C \cap B \vert =1$; let $w$ denote the single vertex in $C
\setminus (A \cup B)$ (see Figure \ref{fig:5rainbow}). By the same argument
as before, for any vertex $x$ not in $A \cup B \cup C$ we have that $A \cup
B \cup C \cup \{ x \}$ is $1$-dominated, so the complements of the
non-$1$-coloured triples $B$ and $C$ must have colour $1$:
$$
c\left(\{ x \} \cup ((A \cup B \cup C) \setminus B)\right) = 1,
$$
$$
c\left(\{ x \} \cup ((A \cup B \cup C) \setminus C)\right) = 1.
$$
This makes it impossible to have triples of colour other than $1$ disjoint
from $A \cup B \cup C$, as they would cover a $1$-coloured triple together
with either $B$ or $C$ (whichever has the colour other from that of the
selected triple). Now taking a maximal matching outside $A \cup B \cup C$,
we can extend it to a near perfect matching avoiding the colour $2$ or the
colour $3$ as follows. If there are no vertices left outside the matching,
add $A$ to get a $1$-coloured matching. If there is $1$ vertex left, join it
to $(A \cup B \cup C) \setminus B$ and add $B$ to avoid the colour $3$; do
the same with $B$ and $C$ switched to avoid the colour $2$. Finally, if
there are $2$ vertices left, join them respectively to the disjoint vertex
pairs $(A \cup B \cup C) \setminus B$ and $(A \cup B \cup C) \setminus C$ to
obtain a matching of colour $1$.

\end{enumerate}
\end{proof}

Thus we may restrict our attention to the case when the union of a triple of
colour $2$ and a triple of colour $3$ cannot cover a triple of colour $1$,
even if they are not disjoint.
\par
We now try to find a vertex such that all triples containing it are of
colour $1$; we will call such a vertex {\it $1$-forcing}. If there are no
triples of colours $1$ and $2$ or $1$ and $3$ such that they intersect in
two vertices, then either there are no triples of colour $1$ - in which case
there is a near perfect matching in colours $2$ and $3$ - or there are no
triples of colour different from $1$ - in which case there is a near perfect
matching in colour $1$. Hence we might assume that there is a pair of
triples of the form $A=\{ v_1, v_2, v_3\}$, $B=\{ v_2, v_3, v_4 \}$ with
$c(A)=1$ and $c(B)=2$, say. By our previous lemma there are no triples of
colour $3$ that contain $v_1$. If there are no such triples disjoint from $A
\cup B$ either, then any near perfect matching containing $A$ or $B$ will be
$1$ and $2$-coloured. Assume therefore that there is a triple $C$ of colour
$c(C)=3$ in the complement of $A \cup B$. No triple covered by $B \cup C$
can have colour $1$, in particular the triple $D = \{ v_2, v_3, x \}$ with
some $x \in C$ has to have colour $2$ or $3$. If $c(D)=3$, then we can
repeat the argument with $A$ and $D$ instead of $A$ and $B$ to get that no
colour $2$ triples contain $v_1$ either, so $v_1$ is $1$-forcing. If
$c(D)=2$, then $A \cup C$ contains triples of all three colours and thus is
$1$-dominated, in particular the triple $E = \{ v_1 \} \cup (C \setminus \{
x \}) = (A \cup C) \setminus D$ has to have colour $1$ due to its complement
having colour $2$. In this case, the application of the same argument to $E$
and $C$ gives us the same result of no triples of colour $2$ containing
$v_1$ and $v_1$ is $1$-forcing again.
\par
Putting such a $1$-forcing vertex aside and repeating the procedure, we
end up with a set of $1$-forcing vertices and a remainder set where either
there are no triples of colour $1$ or there is a near perfect matching in
colours $1$ and $2$ or $3$. In the latter case, we can just complete the
matching with $1$-forcing vertices at will, so we assume now that the
remainder, denoted henceforth by $R$, only has triples of colours $2$ and
$3$.
\par

\par
\begin{figure}
\centering
\includegraphics[width=11cm]{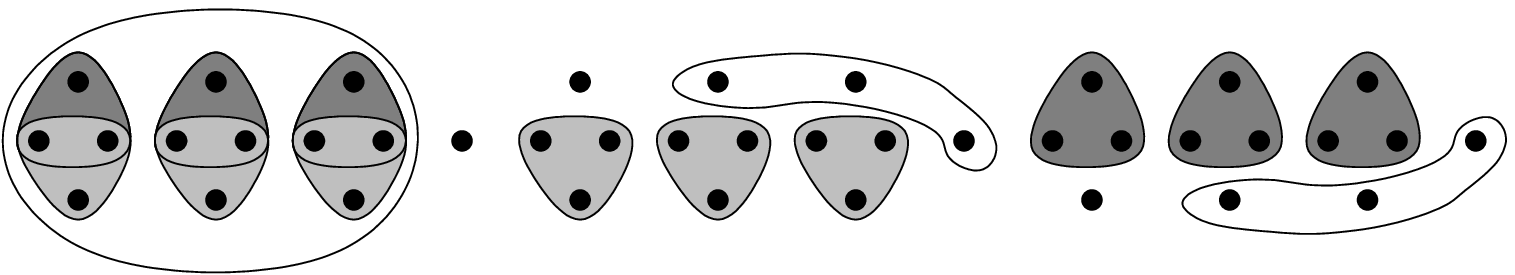}
\caption{A $1$-forcing vertex with $3$ disjoint ``neighbouring'' triples of
colours $2$ and $3$ implies the existence of a universal $13$-vertex set.}
\label{fig:3plus1}
\end{figure}
\par

If $R=V$, then we get the second conclusion of our proposition, so we may
assume that there is at least one $1$-forcing vertex. If, moreover, $R$ had
three disjoint pairs of triples of colours $2$ and $3$ that intersect in $2$
vertices, we could add a $1$-forcing vertex and get a universal $13$-vertex
set (see Figure \ref{fig:3plus1}) - a contradiction. If there are no three
disjoint pairs like that, then after picking at most two of them the rest
(denoted by $R'$) has to be a clique, of colour $2$, say. We can then take a
$1$-forcing vertex and add to it those vertices of the triples of colour $3$
among the chosen pairs of colour $2$ and $3$ that are not covered by the
corresponding triples of colour $2$, and add another vertex from $R'$ if we
still don't have three vertices. Choose a near perfect matching from the
rest of $R$ containing the selected triples of colour $2$ and then cover the
rest with $1$-forcing vertices if there are any left. This yields a near
perfect matching in colours $1$ and $2$, and finishes the proof of the
proposition.
\end{proof}

Since in both cases of Proposition \ref{thm:oneSpread} we get a near perfect
matching in $2$ colours, we only need to consider the case where there exist
spreads of a different colour. By symmetry, assume that $U$ is a spread of
colour $1$ and $W$ is a spread of colour $2$. By Proposition
\ref{thm:twoSpreads}, we can apply Proposition \ref{thm:oneSpread} to $V
\setminus U$, so we either get a near perfect matching avoiding colour $2$
or $3$ or no triples of colour $1$ at all. In the first case, we can add one
of the demonstration splittings of $U$ to get a near perfect matching of $V$
avoiding either colour $2$ or colour $3$. The same argument of applying
Proposition \ref{thm:oneSpread} to $V \setminus W$ yields either a near
perfect matching on $V$ in $2$ colours or no triples of colour $2$ at all.
We may hence assume that we got the second result in both attempts, and the
colouring is such that all triples of colour $1$ intersect $U$ while all
triples of colour $2$ intersect $W$.
\par
We see that in such a setup, the vertex set $V \setminus (U \cup W)$ is a
clique in colour $3$. Additionally, there is at least one triple of colour
$3$ in $U$ (and in $W$, but it may not be disjoint from those in $U$), so if
we take a maximal matching in colour $3$ extending an exhausting of $V
\setminus (U \cup W)$, we end up with at most $2 + \vert U \vert + \vert W
\vert - \vert U \cap W \vert -3 \leq 10$ vertices not covered by this
matching and consequently only containing triples of colours $1$ and $2$. We
distinguish between three possibilities for the number $m$ of vertices left
out:
\begin{itemize}
\item $m \leq 8$ and $m \neq 6$. By the theorem of Alon, Frankl and Lov\'asz
(\cite{Alonetal}) there is an almost perfect monochromatic matching in this
$2$-coloured subgraph: $3$ vertices needed for $1$ triple, $7$ for two
triples. Adding it to the initial colour $3$ matching, we obtain a near
perfect matching of $V$ in $2$ colours.

\item $m=6$ or $m=9$. In this case $\vert V \vert$ is a multiple of $3$, so
either it is at most $12$, in which case we apply Theorem \ref{thm:kozos},
or $\vert V \vert$ is at least $15$, hence the prediction \eqref{eqn:n2k}
gives a size at least one less than that of a perfect matching. In this
latter case, the result of \cite{Alonetal} is sufficient (a size $1$
matching for $m=6$ and a size $2$ matching for $m=9$).

\item $m=10$. Here all of our estimates have to be sharp, that is, $\vert U
\cap W \vert = 1$ and we must have $2$ vertices from $V \setminus (U \cup
W)$ and $8$ vertices from $U \cup W$ not covered by the matching in colour
$3$. If choosing a different maximal matching in colour $3$ leads to a
different case, we are done, so we may assume that no matter which $2$
vertices $a$ and $b$ of $V \setminus (U \cup W)$ are left out from the
initial matching, there do not exist $2$ disjoint triples of colour $3$ in
$U \cup W \cup \{ a,b \}$. But any vertex in $U \cup W \setminus (U \cap W)$
lies in the complement of a triple of colour $3$ - the elements of $U
\setminus W$ miss the colour $3$ triple in $W$ and vice versa. Therefore any
vertex in $U \cup W \setminus (U \cap W)$ together with any two vertices in
$V \setminus (U \cup W)$, and any two vertices in $U \setminus W$ (or $W
\setminus U$) together with any vertex in $V \setminus (U \cup W)$, give a
hyperedge of colour $1$ or $2$.
\par
If now $\vert V \vert \geq 31$, we can cover all of $V \setminus (U \cap W)$
(at most $30$ vertices) by at most $10$ such hyperedges (adding a suitable
splitting of $W$ or applying Theorem \ref{thm:kozos} if $\vert V \vert
=13$). If, on the other hand, $\vert V \vert \geq 32$, then the formula
\eqref{eqn:n2k} predicts a matching at least $1$ less than a near perfect
one. Such a matching can be found with direct application of \cite{Alonetal}
to the $10$-vertex remainder as before.
\end{itemize}
In all three cases we arrive at a matching in $2$ colours of size at least
that predicted by \eqref{eqn:n2k}, finishing the proof of Theorem
\ref{thm:main}.

\end{document}